\def\BibTeX{{\rm B\kern-.05em{\sc i\kern-.025em b}\kern-.08em
    T\kern-.1667em\lower.7ex\hbox{E}\kern-.125emX}}
\begin{document}

\title{A Multi-Agent Primal-Dual Strategy for Composite Optimization over Distributed Features
}

\author{\IEEEauthorblockN{Sulaiman A. Alghunaim}
\IEEEauthorblockA{\textit{Department of Electrical Engineering} \\
\textit{Kuwait University}\\
\textit{Kuwait} \\
salghunaim@ucla.edu}
\and
\IEEEauthorblockN{Ming Yan}
\IEEEauthorblockA{\textit{Departments of CMSE \& Math} \\
\textit{Michigan State University}\\
\textit{East Lansing, MI, USA} \\
myan@msu.edu}
\and 
\IEEEauthorblockN{ Ali H. Sayed}
\IEEEauthorblockA{\textit{School of Engineering} \\
\textit{Ecole Polytechnique Federale de Lausanne} \\
\textit{Switzerland}\\
ali.sayed@epfl.ch}
}
\maketitle
\begin{abstract}
This work studies multi-agent sharing optimization problems with the objective function being the sum of smooth local functions plus a convex (possibly non-smooth) function coupling all agents. This scenario arises in many machine learning and engineering applications, such as regression over distributed features and resource allocation. We reformulate this problem into an equivalent saddle-point problem, which is amenable to decentralized solutions. We then propose a proximal primal-dual algorithm and establish its linear convergence to the optimal solution when the local functions are strongly-convex. To our knowledge, this is the first linearly convergent decentralized algorithm for multi-agent sharing problems with a general convex (possibly non-smooth) coupling function.  
\end{abstract}

\begin{IEEEkeywords}
Decentralized composite optimization, primal-dual methods, linear convergence, distributed learning.
\end{IEEEkeywords}

\section{Introduction}
 We consider $K$ agents connected through a graph. Each agent can only send and receive information from its immediate neighbors. Its goal is to find its corresponding solution, denoted by $w^\star_k \in \real^{Q_k}$, of the following coupled multi-agent optimization problem:
\eq{ 
 \min_{w_1,\cdots,w_K} \quad   \sum_{k=1}^K J_k(w_k)+g\Big(\sum_{k=1}^K B_k w_k \Big), \label{min-generally-coupled}
} 
where the smooth function $J_k:\real^{Q_k} \rightarrow \real$ and the matrix $B_k \in \real^{E \times Q_k}$ are known by agent $k$ only, and $g:\real^{E} \rightarrow \real \cup \{+ \infty\}$ is a convex possibly non-smooth function known by all agents. Problem \eqref{min-generally-coupled} is the sharing formulation, where the agents own different variables but are coupled through the function $g$. Problems of the form \eqref{min-generally-coupled} appear in many machine learning applications, such as regression over distributed features~\cite{boyd2011admm,sundhar2012new}, dictionary learning over distributed models~\cite{chen2014dictionary}, and clustering in graphs~\cite{hallac2015network}. They also appear in engineering applications, including smart grid control~\cite{chang2014distributed} and network utility maximization~\cite{palomar2006tutorial}. For a general convex function $g$,  centralized algorithms for \eqref{min-generally-coupled} have been shown to achieve global linear convergence if the matrix $[B_1, \cdots,B_K]$  has full row rank and $\sum\limits_{k=1}^K J_k(w_k)$ is strongly-convex~\cite{chen2013aprimal,dhingra2018proximal,deng2016global}.  On the other hand, {\em decentralized} algorithms have only been shown to converge linearly under stricter conditions than centralized ones.  In this work, we aim toward closing the gap in linear convergence between centralized and decentralized algorithms for problem~\eqref{min-generally-coupled}.

{\bf Literature Review.} Sharing problems have been studied in different fields and date back to studies in economics~\cite{walras1896} --see the discussion in~\cite{boyd2011admm}. The earliest center-free algorithm to solve such problems dates back to~\cite{ho1980class}. Decentralized solutions for the sharing formulation~\eqref{min-generally-coupled} have only been shown to achieve global linear convergence for special cases and under stricter conditions compared to the centralized ones as we now explain. The works~\cite{necoara2013random,doan2017distributed,nedic2018improved,xu2018dual} establish linear convergence for resource allocation formulations, where $g$ is an indicator function of zero (i.e., $g(x)=0$ if $x= 0$ and $\infty$ otherwise) and $B_k=I$, for smooth and strongly-convex local costs. The work~\cite{yang2019distributed} also establishes linear convergence for resource allocation problems in the presence of simple local constraints (i.e., $\underline{w} \leq w_k\leq \overline{w}$), but under stronger assumptions on the costs such as twice differentiability of $J_k$ and knowledge of the conjugate function of $J_k$. The works~\cite{chang2015multi,alghunaim2018proximal} establish linear convergence for special cases with $g$ being an indicator function of zero; moreover, each $B_k$ is required to have full row rank in~\cite{chang2015multi} or satisfy a certain rank condition in~\cite{alghunaim2018proximal}. The works \cite{ying2018supervised,he2018cola} establish linear convergence for strongly convex objectives and a smooth coupling function $g$.

Note that problem \eqref{min-generally-coupled} recovers the consensus problem~\cite{tsitsiklis1986distributed,nedic2009distributed,sayed2014nowbook} if we choose $g$ such that it returns zero when $w_1=\cdots=w_K$ and $\infty$ otherwise. In this case, the matrix $B=[B_1, \cdots,B_K]$ is sparse and encodes the communication graph between agents. The works~\cite{sun2019convergence,alghunaim2019linearly,alghunaim2019decentralized} studied linear convergence of consensus problems in the presence of a common non-smooth term, which is not applicable for the sharing problem. Different from the consensus problem, the matrix $B$ in the sharing problem is not necessarily sparse, and $B_k$ is a private matrix known by agent $k$ only.  Thus, solution methods for these two problems are different \cite{boyd2011admm}.   
To the best of our knowledge, establishing linearly convergent algorithms for the {\em sharing} problem~\eqref{min-generally-coupled} with a general convex $g$ are still missing.

{\bf Contribution.}  As mentioned before, there is a theoretical gap between centralized and decentralized algorithms for the sharing problem~\eqref{min-generally-coupled}. In this work, we propose a decentralized algorithm for~\eqref{min-generally-coupled} and establish its linear convergence to the global solution. The derivation of our algorithm is based on reformulating \eqref{min-generally-coupled} into an equivalent problem that is amenable to decentralized solutions. This technique motivates the derivation of many other decentralized algorithms.  
 
{\bf Notation.}  We let $\|x\|^2_{D}=x\tran Dx$ for a square matrix $D$. The symbol $I_S$ denotes the identity matrix of size $S$, and $S$ is removed when there is no confusion. The symbol $\one_N$ denotes the $N \times 1$ vector with all entries being one. The Kronecker product of two matrices is denoted by $A\otimes B$.  We use ${\rm col}\{x_k\}_{k=1}^{K}$ to denote a column vector formed by stacking $x_1,\cdots, x_K$ on top of each other and $\text{blkdiag}\{X_k\}_{k=1}^{K}$ to denote a block diagonal matrix consisting of diagonal blocks $\{X_k\}$.  The subdifferential $\partial f(x)$ of a function $f$ at $x$ is the set of all subgradients. 
The proximal operator of a function $f(x)$ with step-size $\mu$ is  $
{\rm prox}_{\mu f}(x)= \argmin\limits_u \ f(u)+{1 \over 2 \mu} \|x-u\|^2$.  
The conjugate of a function $f$ is defined as $f^*(v)=\sup\limits_{x} \ v\tran x -f(x)$. A differentiable function $f$ is $\delta$-smooth and $\nu$-strongly-convex if $\|\grad f(x)-\grad f(y)\| \leq \delta \|x-y\|$ and $(x-y)\tran \big(\grad f(x)-\grad f(y)\big) \geq \nu \|x-y\|^2$, respectively, for any $x$ and $y$. 

\section{Saddle-Point Reformulation}
In this section, we provide the main assumption on the objective and explain how \eqref{min-generally-coupled} is reformulated into an equivalent saddle-point problem.  We introduce the quantities:
\begin{subequations}
\eq{
\sw &\define {\rm col}\{w_1, \cdots,w_K\} \in \real^Q, ~ Q \textstyle \define \sum_{k=1}^K Q_k, \\
\cJ(\sw) &\define \textstyle \sum_{k=1}^K J_k(w_k), \\
B &\define  \begin{bmatrix}
B_1 & \cdots & B_K
\end{bmatrix} \in \real^{E \times Q}.  \label{B_cen}
}
\end{subequations}
Then, problem \eqref{min-generally-coupled} can be rewritten as: 
\eq{ 
\min_{\ssw} \quad   \cJ(\sw)+g\left(B \sw \right). 
\label{primal_reformulated}
}
Throughout this work, the following assumption holds.
\begin{assumption} \label{assump:cost}
{ {\bf (Objective Functions)} Problem \eqref{primal_reformulated} has a solution $\sw^\star$, and the function $\cJ:\real^{Q} \rightarrow \real$ is $\delta$-smooth 
and $\nu$-strongly-convex with $0<\nu\leq\delta$. Moreover, the function $g:\real^{E} \rightarrow \real \cup \{+ \infty\}$ is proper lower semi-continuous and convex, and there exists $\sw \in \real^Q$ such that $B\sw$ belongs to the relative interior domain of $g$.   }
\end{assumption}
Under Assumption \ref{assump:cost}, strong duality holds \cite[Corollary 31.2.1]{rockafellar1970convex}, and problem \eqref{primal_reformulated} is equivalent to the following saddle-point problem \cite[Proposition 19.18]{bauschke2011convex}:
\eq{ \boxed{
\min_{\ssw} \max_{y} \quad   \cJ(\sw)+ y\tran B \sw  -g^*(y)} \label{saddle_point_cen}
} 
where $y$ is the dual variable and  $g^*$ is the conjugate function of $g$. A primal-dual pair $(\sw^\star, y^\star)$ is  optimal if, and only if, it satisfies the optimality conditions \cite[Proposition 19.18]{bauschke2011convex}:
\begin{subequations} \label{KKT}
\eq{
-B\tran y^\star & = \grad \cJ(\sw^\star)  \label{kkt1}, \\
B \sw^\star & \in   \partial g^*(y^\star) \label{kkt2}.
}
\end{subequations}
Directly solving \eqref{saddle_point_cen} does not result in a decentralized algorithm. This is because the dual variable $y$ couples all agents, and these algorithms would require a centralized unit to compute the dual update. Therefore, further reformulations are needed to derive a decentralized solution.  
\section{Decentralized Reformulation}
In this section, we reformulate \eqref{saddle_point_cen} into another equivalent saddle-point problem that can be solved in a decentralized manner.  
Since the dual variable $y$ couples all agents, we introduce local copies of $y$ at all agents. Let $y_k$ denote a local copy of $y$ at agent $k$. 
In addition, we introduce the following  network quantities:
\begin{subequations}
\eq{
\sy&\define {\rm col}\{y_k\}_{k=1}^K  \in \real^{EK},\quad\cG^*(\ssy)\define{1 \over K}  \sum_{k=1}^K g^*(y_k), \nonumber \\
\cB_d&\define{\rm blkdiag}\{B_k\}_{k=1}^K\in \real^{EK\times Q},\nonumber
} 
\end{subequations}
and the symmetric matrix $\cL \in \real^{EK \times EK}$ such that:
\eq{
\cL \sy =0 \iff y_1=\cdots =y_K. \label{consensus_matrix}
}
 Consider the saddle-point problem:
\eq{
\boxed{ \min_{\ssw,\ssx} \max_{\ssy}  \quad   \cJ(\sw)+ \ssy\tran \cB_d \sw+\ssy\tran \cL \ssx -\cG^*(\sy) } \label{saddle_point_dec}
} 
with an optimal solution $(\sw^\star,\ssx^\star,\ssy^\star)$ satisfying \cite{bauschke2011convex}:
\begin{subequations} \label{kkt_dec}
\eq{
-\cB_d\tran \sy^\star &=\grad\cJ(\sw^\star),  \label{kkt1_dec} \\
\cL \sy^\star&=   0, \label{kkt2_dec} \\ 
 \cB_d \sw^\star+\cL \ssx^\star & \in    \partial \cG^*(\sy^\star). \label{kkt3_dec} 
}
\end{subequations}
 Problem \eqref{saddle_point_dec} can be solved in a decentralized manner because the matrix $\cL$ encodes the network sparsity structure and the matrix $\cB_d$ is block diagonal. We now show that problems \eqref{saddle_point_dec} and \eqref{saddle_point_cen} are equivalent. 

\begin{lemma} {\bf (Saddle-Point)} \label{lemma_kkt_dec}
If $(\sw^\star,\ssx^\star,\ssy^\star)$ satisfies the optimality condition \eqref{kkt_dec}, then it holds that $\sy^\star=\one_K \otimes y^\star$ with $(\sw^\star, y^\star)$ satisfying the optimality condition \eqref{KKT}.
\end{lemma}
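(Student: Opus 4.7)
The plan is to extract the three conclusions of \eqref{KKT} one at a time from \eqref{kkt_dec}, using in essential ways the defining property of $\cL$ in \eqref{consensus_matrix} together with the block structures of $\cB_d$ and $\cG^*$. The argument is elementary but has one small wrinkle in the dual inclusion that requires the symmetry of $\cL$.

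First, I would use \eqref{kkt2_dec} directly: since $\cL \sy^\star = 0$, the defining property \eqref{consensus_matrix} gives $y_1^\star=\cdots=y_K^\star$, so we may write $\sy^\star=\one_K\otimes y^\star$ for some $y^\star\in\real^E$; this is the first claim of the lemma. Next, I would verify \eqref{kkt1}. Since $\cB_d=\text{blkdiag}\{B_k\}_{k=1}^K$, we get
\[
\cB_d\tran \sy^\star = \text{col}\{B_k\tran y^\star\}_{k=1}^K = B\tran y^\star,
\]
using $B=[B_1,\cdots,B_K]$, so \eqref{kkt1_dec} becomes exactly $-B\tran y^\star=\grad\cJ(\sw^\star)$, which is \eqref{kkt1}.

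The main work is in deducing \eqref{kkt2} from \eqref{kkt3_dec}. From the definition $\cG^*(\sy)=\tfrac1K\sum_k g^*(y_k)$, the subdifferential is separable: $\partial\cG^*(\sy^\star)$ consists of all block vectors whose $k$-th block lies in $\tfrac{1}{K}\partial g^*(y^\star)$. Thus \eqref{kkt3_dec} says there exist $u_k\in\partial g^*(y^\star)$, $k=1,\ldots,K$, such that for each $k$,
\[
B_k w_k^\star + [\cL\ssx^\star]_k = \tfrac{1}{K}u_k,
\]
where $[\,\cdot\,]_k$ denotes the $k$-th block of size $E$. Summing over $k$, the left-hand side gives $\sum_{k=1}^K B_k w_k^\star + (\one_K\tran\otimes I_E)\cL\ssx^\star = B\sw^\star + (\one_K\tran\otimes I_E)\cL\ssx^\star$. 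The step I expect to be the main obstacle is showing that the second term vanishes: this is where I would invoke that $\cL$ is symmetric and that, by \eqref{consensus_matrix}, its null space equals $\text{range}(\one_K\otimes I_E)$; hence the range of $\cL$ is orthogonal to $\text{range}(\one_K\otimes I_E)$, i.e., $(\one_K\tran\otimes I_E)\cL = 0$. Therefore $B\sw^\star = \tfrac{1}{K}\sum_{k=1}^K u_k$, which, by convexity of the set $\partial g^*(y^\star)$, lies in $\partial g^*(y^\star)$, giving \eqref{kkt2} and completing the proof.
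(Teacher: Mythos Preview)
Your proposal is correct and follows essentially the same approach as the paper: both use \eqref{kkt2_dec} with \eqref{consensus_matrix} to get $\sy^\star=\one_K\otimes y^\star$, rewrite \eqref{kkt1_dec} as \eqref{kkt1} via the block structure of $\cB_d$, and obtain \eqref{kkt2} by applying $(\one_K\tran\otimes I_E)$ to \eqref{kkt3_dec} together with $(\one_K\tran\otimes I_E)\cL=0$. The only difference is cosmetic: the paper states the last step as a single line, whereas you unpack it blockwise and make explicit the use of convexity of $\partial g^*(y^\star)$ to absorb the average $\tfrac{1}{K}\sum_k u_k$ --- a clarification the paper leaves implicit.
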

\begin{proof}
From equations \eqref{consensus_matrix} and \eqref{kkt2_dec}, we have that $\sy^\star= \one_K \otimes y^\star$ for some $y^\star$. Thus, equation \eqref{kkt1_dec} can be written as:
\eq{
-\cB_d\tran \sy^\star=-B\tran y^\star =\grad\cJ(\sw^\star). \label{lklk1}
}
Multiplying \eqref{kkt3_dec} by $\one_K\tran \otimes I_E$ and using $\sy^\star= \one_K \otimes y^\star$ gives:
\eq{
&(\one_K\tran \otimes I_E)\cB_d \sw^\star+(\one_K\tran \otimes I_E)\cL \ssx^\star & \in   (\one_K\tran \otimes I_E) \partial \cG^*(\sy^\star) \nnb
&\Longrightarrow  B \sw^\star \in \partial g^*(y^\star), \label{lklk2}
}
where we used the fact that $(\one_K\tran \otimes I_E)\cL=0$, which holds from \eqref{consensus_matrix}. From equations \eqref{lklk1} and \eqref{lklk2}, we see that $(\sw^\star, y^\star)$ satisfies the optimality condition \eqref{KKT}. 
\end{proof}

\section{Decentralized Strategy}
In this section, we propose an algorithm to solve \eqref{saddle_point_dec} and show how to implement it in a decentralized manner.
\subsection{General Algorithm}
Let $\sw_{-1},\sy_{-1}$ be any values and $\ssx_{-1}=0$. The iteration is:
\begin{subequations} \label{alg_dec}
\eq{
\sw_i&=\sw_{i-1}-\mu_w \grad \cJ(\sw_{i-1})-\mu_w \cB\tran_{d} \sy_{i-1}, \label{W_update_alg} \\
\ssz_{i}&= \sy_{i-1}+\mu_y \cB_{d}  \sw_i  + \cL \ssx_{i-1}, \label{Z_update_alg} \\
\ssx_{i} &= \ssx_{i-1}- \cL \ssz_{i}, \label{X_update_alg} \\
\sy_i&= {\rm prox}_{\mu_y \cG^*}(\bar{\cA} \ssz_{i}), \label{Y_update_alg}
}
\end{subequations}
where $\bar{\cA}=\bar{A} \otimes I_E$ with $\bar{A} \in \real^{K \times K}$ satisfying Assumption \ref{assump:matrix}.  
\begin{assumption} \label{assump:matrix}
{ {\bf (Combination Matrices)}  We assume that $\bar{A}$ is a primitive symmetric doubly-stochastic matrix. Moreover, we assume that the matrix $\cL$ satisfies condition \eqref{consensus_matrix} and 
\begin{align}      0<I-\cL^2, 
\quad   \bar{\cA}^2 \leq I-\cL^2. \label{AleqL_assump}
\end{align}}
\end{assumption}
The condition on $\bar{A}$ can be easily satisfied for any undirected connected network -- see~\cite{sayed2014nowbook}. Note that the eigenvalues of $\bar{\cA}$ belong to $(-1,1]$. Given $\bar{\cA}$, there are many choices for $\cL$. For instance, we can let $\cL^2=I-\bar{\cA}^2$ and check whether $I-\cL^2$ is positive definite or not. If it is positive definite, then the assumption is satisfied. If not, we can let $\cL^2=c(I-\bar{\cA}^2)$ for any $c\in(0,1)$.
Although many choices for $\cL$ and $\bar\cA$ are possible, due to space considerations, we only focus on one choice in this work.

We first construct a primitive symmetric doubly-stochastic matrix $A=[a_{sk}]\in \real^{K\times K}$ such that $a_{sk}=0$ if two agents $s$ and $k$ are not connected through an edge. Then, we let $\bar A = 0.5(I+A)$, which is also a primitive symmetric doubly-stochastic matrix. In this case, the eigenvalues of $\bar\cA$ belong to $(0,1]$, and we can let $\cL^2=I-\bar\cA$, which satisfies Assumption \ref{assump:matrix}. We now show how to implement \eqref{alg_dec} using these choices.


\subsection{Proximal Exact Dual Diffusion (PED$^2$)} \label{sec_dual_prox_diff}
 From \eqref{Z_update_alg}--\eqref{X_update_alg}, it holds that, for all $i\geq 1$,
\eq{
\hspace{-1mm }  \ssz_{i}& \hspace{-0.25mm }  = \hspace{-0.25mm }  (I-\cL^2) \ssz_{i-1}+ \sy_{i-1}-\sy_{i-2}+\mu_y \cB_{d}  (\sw_i -\sw_{i-1}). 
}
It eliminates $\ssx_i$, and we can rewrite \eqref{alg_dec} as
\eq{
\hspace{-1mm } \sw_i&=\sw_{i-1}-\mu_w \grad \cJ(\sw_{i-1})-\mu_w \cB\tran_{d} \sy_{i-1},  \nonumber\\
\hspace{-1mm } \ssz_{i}&  =(I-\cL^2) \ssz_{i-1}+ \sy_{i-1}-\sy_{i-2}+\mu_y \cB_{d}  (\sw_i -\sw_{i-1}),\nonumber\\
\hspace{-1mm } \phi_i &=  \bar{\cA} \ssz_{i},   \nonumber\\
\hspace{-1mm } \sy_i&= {\rm prox}_{\mu_y \cG^*}(\phi_i ).\nonumber
}
With our choice of $\bar\cA$ and $\cL$, the $k$-th block in the vectors can be updated by agent $k$. Let $w_{k,-1},~y_{k,-1}$ be any values and $\phi_{k,-1}=\psi_{k,-1}$. For each agent $k$, repeat for $i \geq 0$:
\begin{subequations} \label{exac_proxdiff_dual}
\eq{
w_{k,i}&=w_{k,i-1}-\mu_w \grad J_k(w_{k,i-1})-\mu_w B_k\tran y_{k,i-1}, \\
\psi_{k,i}&= y_{k,i-1}+\mu_y B_{k}  w_{k,i}, \\
z_{k,i} &= \phi_{k,i-1}+\psi_{k,i}-\psi_{k,i-1}, \\
\phi_{k,i}&= \sum_{s \in \cN_k}\bar{a}_{sk} z_{s,i}, \\
y_{k,i}&= {\rm prox}_{\mu_y/K g^*}(\phi_{k,i}). 
}
\end{subequations}

\section{Linear Convergence Result}
In this section, we establish the linear convergence of~\eqref{alg_dec}.  We first show that the fixed-point of \eqref{alg_dec} is optimal.
\begin{lemma} {\bf (Fixed Point)} \label{lemma_optimiality}
A  fixed-point $(\sw^o,\ssx^o,\ssy^o, \ssz^o)$ of \eqref{alg_dec} exists, i.e., 
\begin{subequations} \label{optimality_conditions}
\eq{
0&=\grad\cJ(\sw^o) + \cB_d\tran \sy^o, \label{optimality1} \\
\ssz^o&= \sy^o+\mu_y \cB_{d}  \sw^o  + \cL \ssx^o, \label{optimality2} \\
 0&=\cL \ssz^o,  \label{optimality3} \\
 \sy^o&= {\rm prox}_{\mu_y \cG^*}(\bar{\cA} \ssz^o). \label{optimality4} 
}
\end{subequations}
Also, for any fixed-point $(\sw^o,\ssx^o,\ssy^o, \ssz^o)$, it holds that $\sy^o=\one_K \otimes y^o$ with $(\sw^o, y^o)$ satisfying the optimality condition \eqref{KKT}.
\end{lemma}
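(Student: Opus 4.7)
The plan is to split the lemma into two independent tasks: (i) exhibit a concrete fixed-point, and (ii) show every fixed-point satisfies the original KKT conditions \eqref{KKT}. Both arguments are parallel to Lemma \ref{lemma_kkt_dec}, but with the extra bookkeeping introduced by the auxiliary variable $\ssz^o$ and the proximal step.

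For existence, I would start from an optimal primal-dual pair $(\sw^\star,y^\star)$ of \eqref{saddle_point_cen}, which is guaranteed by Assumption \ref{assump:cost}, and construct a candidate fixed-point by setting $\sw^o=\sw^\star$, $\sy^o=\one_K\otimes y^\star$ and $\ssz^o=\one_K\otimes z^o$ for the choice $z^o=y^\star+(\mu_y/K)B\sw^\star$. Then \eqref{optimality1} is exactly $\grad\cJ(\sw^\star)+B\tran y^\star=0$, which is \eqref{kkt1}. The condition \eqref{optimality3} follows from $\cL(\one_K\otimes z^o)=0$ via \eqref{consensus_matrix}, and since $\bar A\one_K=\one_K$ gives $\bar\cA\ssz^o=\ssz^o$, the proximal identity \eqref{optimality4} reduces to $y^\star=\mathrm{prox}_{(\mu_y/K)g^*}(z^o)$, which is equivalent to $(K/\mu_y)(z^o-y^\star)=B\sw^\star\in\partial g^*(y^\star)$, i.e., \eqref{kkt2}. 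Finally, \eqref{optimality2} becomes the linear system $\cL\ssx^o=\ssz^o-\sy^o-\mu_y\cB_d\sw^o$. The remaining (and really the only nontrivial) step is to verify this system is solvable: because $\cL$ is symmetric, its range is the orthogonal complement of its kernel, so it suffices to check that the right-hand side is annihilated by $\one_K\tran\otimes I_E$; a direct expansion using $B\sw^\star=\sum_kB_kw_k^\star$ shows that indeed $(\one_K\tran\otimes I_E)(\ssz^o-\sy^o-\mu_y\cB_d\sw^o)=\mu_yB\sw^\star-\mu_yB\sw^\star=0$, so a valid $\ssx^o$ exists.

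For the converse, let $(\sw^o,\ssx^o,\sy^o,\ssz^o)$ be an arbitrary fixed-point. From \eqref{optimality3} and \eqref{consensus_matrix} one obtains $\ssz^o=\one_K\otimes z^o$ for some $z^o\in\real^E$, and then $\bar\cA\ssz^o=\ssz^o$ again by double stochasticity. Because $\cG^*$ is separable, the proximal step \eqref{optimality4} applies coordinatewise, yielding $\sy^o=\one_K\otimes y^o$ with $y^o=\mathrm{prox}_{(\mu_y/K)g^*}(z^o)$, or equivalently $(K/\mu_y)(z^o-y^o)\in\partial g^*(y^o)$. Substituting $\sy^o=\one_K\otimes y^o$ into \eqref{optimality1} collapses $\cB_d\tran\sy^o$ to $B\tran y^o$, delivering \eqref{kkt1}. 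Multiplying \eqref{optimality2} on the left by $\one_K\tran\otimes I_E$ (using that $(\one_K\tran\otimes I_E)\cL=0$) yields $Kz^o=Ky^o+\mu_yB\sw^o$, hence $(K/\mu_y)(z^o-y^o)=B\sw^o$; combined with the subdifferential inclusion above, this is exactly \eqref{kkt2}.

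The only step I expect to require care is the range-check for solvability of $\cL\ssx^o=\ssz^o-\sy^o-\mu_y\cB_d\sw^o$ in the existence half; everything else is a mechanical combination of Lemma \ref{lemma_kkt_dec}-style averaging with the proximal characterization of $\sy^o$, so the argument should fit in a few lines beyond those two observations.
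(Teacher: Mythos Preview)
Your proposal is correct and follows essentially the same route as the paper: for existence you build $\sw^o=\sw^\star$, $\sy^o=\one_K\otimes y^\star$, $\ssz^o=\one_K\otimes(y^\star+\tfrac{\mu_y}{K}B\sw^\star)$ and verify solvability of \eqref{optimality2} via the range/null-space check $(\one_K\tran\otimes I_E)(\ssz^o-\sy^o-\mu_y\cB_d\sw^o)=0$, exactly as the paper does; for the converse you use \eqref{optimality3} to force consensus of $\ssz^o$ and then of $\sy^o$, and average \eqref{optimality2} to recover \eqref{kkt2}. The only cosmetic difference is that the paper packages the converse averaging step as an appeal to Lemma~\ref{lemma_kkt_dec} after first rewriting \eqref{optimality2}--\eqref{optimality4} as $\cB_d\sw^o+\tfrac{1}{\mu_y}\cL\ssx^o\in\partial\cG^*(\sy^o)$, whereas you carry out the same computation directly.
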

\begin{proof}
Given an optimal solution $(\sw^\star,y^\star)$ of~\eqref{saddle_point_cen} that satisfies \eqref{KKT}, we let $\sw^o \define \sw^\star$ and $\sy^o \define \sy^\star=\one_K\otimes y^\star$. Then, \eqref{optimality1} holds because of \eqref{kkt1_dec}. We define 
\eq{
\ssz^o \define \one_K\otimes (y^\star+ {\mu_y\over K}B\sw^\star)\define \one_K\otimes z^o,\label{lem111}
}
which satisfies condition \eqref{optimality3}.  
Because of the construction of $\bar{\cA}$ and~\eqref{optimality3}, we have $\bar{\cA} \ssz^o= \ssz^o$, and  equation~\eqref{optimality4} is equivalent to $\ssz^o-\sy^o \in \mu_y \partial \cG^*(\sy^o)$.  Thus, using the definition of $\cG^*$,  equation~\eqref{optimality4}  holds from \eqref{kkt2} and \eqref{lem111}. Finally, we construct $\ssx^o$ such that \eqref{optimality2} holds. To see this, note that
\eq{
& (\one_K^\top\otimes I_E)(\ssz^o- \sy^o-\mu_y \cB_{d}  \sw^o ) \nnb
 = & K (z^o-y^o)-\mu_y B \sw^o \overset{\eqref{lem111}}{=}0
}
The above equation implies that $\ssz^o- \sy^o-\mu_y \cB_{d}  \sw^o $ belongs to the range space of $\cL$ (null space of $(\one_K^\top\otimes I_E)$). Thus, there exists $\ssx^o$ such that \eqref{optimality2} holds.


Now, if $(\sw^o,\ssx^o,\ssy^o, \ssz^o)$ is a fixed-point of \eqref{alg_dec}, then we have $\ssz^o=\one_K\otimes z^o$ because of~\eqref{optimality3}. Therefore, $\bar{\cA} \ssz^o= \ssz^o$, and~\eqref{optimality4} shows that $\ssy^o=\one_K \otimes y^o$. Combining~\eqref{optimality2} and~\eqref{optimality4}, we have $\cB_d\sw^o+{1\over \mu_y}\cL\ssx^o\in\partial \cG^*(\ssy^o)$. Therefore, $(\sw^o,y^o)$ satisfies~\eqref{KKT}, which follows from Lemma \ref{lemma_kkt_dec}.
\end{proof}
Note from \eqref{X_update_alg} that if $\ssx_{-1}=0$, then $\ssx_{1} = - \cL \ssz_{1}$ belongs to the range space of $\cL$. Consequently, $\{\ssx_i\}_{i \geq 0}$ will always remain in  the range space of $\cL$. By following similar arguments to~\cite[Lemma 2]{alghunaim2019linear}, we can always assume that $(\sw^o,\ssx^o,\ssy^o,\ssz^o)$ is a fixed-point with $\ssx^o$ in the range space of $\cL$ because adding a vector in the null space of $\cL$ to $\ssx^o$ does not change the optimality condition. To analyze the algorithm \eqref{alg_dec}, we consider the error quantities:
\begin{subequations}
\eq{
\tsw_i&=\sw_i-\sw^o, \ &\tsy_i&=\ssy_i-\ssy^o, \nonumber\\
\tsz_i&=\ssz_i-\ssz^o, \ &\tsx_{i}&=\ssx_i-{\ssx}^o. \nonumber
}
\end{subequations}
From \eqref{alg_dec} and \eqref{optimality_conditions}, the error quantities evolve as:
\begin{subequations}
\eq{
\tsw_i&=\tsw_{i-1}-\mu_w (\grad \cJ(\sw_{i-1})-\grad \cJ(\sw^o))-\mu_w \cB\tran_{d} \tsy_{i-1}, \label{err_w}\\
\tsz_{i}&= \tsy_{i-1}+\mu_y \cB_{d}  \tsw_i  + \cL \tsx_{i-1}, \label{err_z} \\
\tsx_{i} &= \tsx_{i-1}- \cL \tsz_{i}, \label{err_x} \\
\tsy_i&= {\rm prox}_{\mu_y \cG^*}(\bar{\cA} \ssz_{i})-{\rm prox}_{\mu_y \cG^*}(\bar{\cA} \ssz^o). \label{err_y}
}
\end{subequations}
To state our main result, we note that condition \eqref{AleqL_assump} implies that  $0 \leq \cL^2 < I$. Therefore, $0<\underline{\sigma}(\cL)<1$, where  $\underline{\sigma}(\cL)$ denotes the smallest {\em non-zero} singular value of $\cL$. Let $\sigma_{\max}(\cL)$ denote the largest singular value of $\cL$. The following result establishes the linear convergence of the algorithm \eqref{alg_dec}.
\begin{theorem}\label{thm_lin_con}{\bf (Linear Convergence)} 
Let Assumptions \ref{assump:cost} and \ref{assump:matrix} hold. Assume that each $B_k$ has full row rank. If the step-sizes $\mu_w$ and $\mu_y$ are strictly positive and satisfy 
\eq{
\mu_w \leq \tfrac{2}{ \delta +\nu}, \quad \mu_y < \tfrac{2 \delta \nu}{(\delta+\nu)\sigma_{\max}^2(\cB_d)}, \label{step_size_c}
}
then it holds that $
\|\tsw_i\|^2 \leq \gamma^i
 C_o$ for all $i \geq 0$
and some $C_o\geq0$ where
\eq{
\gamma=\max & \bigg\{ \frac{1- {2\mu_w \delta \nu \over \delta+\nu}}{ 1-\mu_y \mu_w \sigma_{\max}^2(\cB_d)},1-\mu_w \mu_y\lambda_{\min}(\cB_d\cB_d\tran), \nnb
& \quad1-\underline{\sigma}^2(\cL) \bigg\}<1,
}
with $\lambda_{\min}(\cB_d\cB_d\tran)$ being the smallest eigenvalue of $\cB_d\cB_d\tran$.
\end{theorem}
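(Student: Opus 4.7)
The plan is to prove linear convergence through a Lyapunov argument on the error recursions \eqref{err_w}--\eqref{err_y}. Because $\ssx_{-1}=0$ and the iterates $\ssx_i$ stay in the range space of $\cL$ (as noted just before the theorem), I take $\ssx^o$ in the same subspace, so that throughout the analysis $\tsx_i$ lives in the subspace where $\cL$ acts with smallest singular value $\underline{\sigma}(\cL)>0$. The goal is a Lyapunov function of the form $V_i=\|\tsw_i\|^2+\alpha\|\tsy_i\|^2+\beta\|\tsx_i\|^2$, with weights $\alpha,\beta$ to be chosen so that the cross terms between primal, dual, and consensus errors cancel and the three contraction factors appearing in $\gamma$ emerge naturally.

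For the primal error, I would apply the standard strong-convexity/smoothness inequality to \eqref{err_w}. Since $\mu_w\leq 2/(\delta+\nu)$, the gradient step satisfies
\begin{equation*}
\|\tsw_{i-1}-\mu_w(\grad\cJ(\sw_{i-1})-\grad\cJ(\sw^o))\|^2\leq\bigl(1-\tfrac{2\mu_w\delta\nu}{\delta+\nu}\bigr)\|\tsw_{i-1}\|^2.
\end{equation*}
Expanding $\|\tsw_i\|^2$ as the square of this vector minus $\mu_w\cB_d\tran\tsy_{i-1}$, and splitting the cross term via $\|a+b\|^2\leq(1+\epsilon)\|a\|^2+(1+1/\epsilon)\|b\|^2$ with $\epsilon$ tuned so that $(1+1/\epsilon)\mu_w^2\sigma_{\max}^2(\cB_d)$ matches a dual weight of order $\mu_w/\mu_y$, produces exactly the ratio $(1-2\mu_w\delta\nu/(\delta+\nu))/(1-\mu_y\mu_w\sigma_{\max}^2(\cB_d))$ in front of $\|\tsw_{i-1}\|^2$ once the residual $\|\cB_d\tran\tsy_{i-1}\|^2$ is absorbed into the weighted dual piece of $V$.

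For the dual and consensus side, I would use proximal nonexpansiveness on \eqref{err_y} to get $\|\tsy_i\|^2\leq\|\bar{\cA}\tsz_i\|^2$, then invoke the Assumption~\ref{assump:matrix} bound $\bar{\cA}^2\leq I-\cL^2$ to obtain $\|\bar{\cA}\tsz_i\|^2\leq\|\tsz_i\|^2-\|\cL\tsz_i\|^2$. Substituting \eqref{err_z} and using \eqref{err_x}, the $\|\cL\tsz_i\|^2$ term combines with $\tsx_i=\tsx_{i-1}-\cL\tsz_i$ to yield a $(1-\underline{\sigma}^2(\cL))$ contraction on $\|\tsx\|^2$ (working in the range of $\cL$), while the $\mu_y\cB_d\tsw_i$ term pairs against the primal bound to give a $(1-\mu_w\mu_y\lambda_{\min}(\cB_d\cB_d\tran))$ contraction on the dual component. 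The full row rank assumption on each $B_k$ enters here, making $\cB_d\cB_d\tran\succ 0$ and thereby converting $\|\cB_d\tran\tsy\|^2$ coercivity into $\|\tsy\|^2$ coercivity at the rate $\lambda_{\min}(\cB_d\cB_d\tran)$.

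The main obstacle will be the bookkeeping: \eqref{err_z} depends on the new primal iterate $\tsw_i$ rather than on $\tsw_{i-1}$, the update \eqref{err_x} integrates $\tsz_i$, and the proximal step in \eqref{err_y} mixes everything through $\bar{\cA}$, so the three bounds must be assembled in a precise order with compatible weights $\alpha,\beta$ to avoid circular inequalities. The step-size condition $\mu_y<2\delta\nu/((\delta+\nu)\sigma_{\max}^2(\cB_d))$ is precisely what keeps $1-\mu_y\mu_w\sigma_{\max}^2(\cB_d)$ strictly larger than $1-2\mu_w\delta\nu/(\delta+\nu)$ (and positive), making the first rate strictly below 1; the same range simultaneously guarantees the other two contractions are in $(0,1)$, so taking the maximum yields $\gamma<1$ and the claimed bound $\|\tsw_i\|^2\leq\gamma^i C_o$ follows by iterating $V_i\leq\gamma V_{i-1}$.
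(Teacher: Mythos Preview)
Your outline captures the right ingredients (co-coercivity of the gradient step, nonexpansiveness of the prox combined with $\bar{\cA}^2\leq I-\cL^2$, and the range-space argument for $\tsx$), but the scheme you describe has an internal inconsistency that blocks the stated rate. You propose to handle the primal update by the Young split $\|a+b\|^2\leq(1+\epsilon)\|a\|^2+(1+1/\epsilon)\|b\|^2$ applied to $\tsw_i=(\tsw_{i-1}-\mu_w\Delta)-\mu_w\cB_d^\top\tsy_{i-1}$, and you simultaneously claim that on the dual side ``the $\mu_y\cB_d\tsw_i$ term pairs against the primal bound to give a $(1-\mu_w\mu_y\lambda_{\min}(\cB_d\cB_d^\top))$ contraction.'' These two choices are incompatible: once Young's inequality has replaced the cross term $-2\mu_w\tsw_i^\top\cB_d^\top\tsy_{i-1}$ by a nonnegative upper bound, there is nothing left on the primal side to cancel the cross term $+2\mu_y\tsy_{i-1}^\top\cB_d\tsw_i$ that appears in $\|\tsz_i\|^2$. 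Concretely, after your primal bound one is left with $c_y\|\tsy_{i-1}+\mu_y\cB_d\tsw_i\|^2$ on the right, which by itself cannot be bounded by $\gamma_2\,c_y\|\tsy_{i-1}\|^2$ plus terms already controlled; any further Young split only inflates the $\|\tsy_{i-1}\|^2$ coefficient above~$1$ and leaves an uncontrolled $\mu_w\mu_y\|\cB_d\tsw_i\|^2$ on the wrong side.

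The paper avoids this by \emph{not} splitting the primal cross term. It first proves an exact identity (Lemma~\ref{lemma_equality}),
\[
\|\tsw_i\|^2_{I-\mu_y\mu_w\cB_d^\top\cB_d}+c_y\|\tsz_i\|^2_{I-\cL^2}+c_y\|\tsx_i\|^2
=\|\tsw_{i-1}-\mu_w\Delta\|^2+c_y\|\tsy_{i-1}\|^2_{I-\mu_w\mu_y\cB_d\cB_d^\top}+c_y\|\tsx_{i-1}\|^2_{I-\cL^2},
\]
with $c_y=\mu_w/\mu_y$. Here the weighted norm $I-\mu_y\mu_w\cB_d^\top\cB_d$ on $\tsw_i$ is precisely what absorbs the $\mu_y^2\|\cB_d\tsw_i\|^2$ term from $\|\tsz_i\|^2$, and the weighted norm $I-\mu_w\mu_y\cB_d\cB_d^\top$ on $\tsy_{i-1}$ is exactly the residual after the two primal--dual cross terms cancel. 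Only \emph{after} this equality does one apply the four one-sided bounds you list (co-coercivity, $\bar{\cA}^2\leq I-\cL^2$, full row rank of $\cB_d$, and $\tsx_{i-1}\in\mathrm{range}(\cL)$), each yielding one of $\gamma_1,\gamma_2,\gamma_3$ with no slack. The Lyapunov function is therefore $\|\tsw_i\|^2_{I-\mu_y\mu_w\cB_d^\top\cB_d}+c_y\|\tsy_i\|^2+c_y\|\tsx_i\|^2$, not one with an unweighted $\|\tsw_i\|^2$. If you want to salvage your plan, drop the Young split on the primal and instead expand both sides exactly to recover this identity; the ``bookkeeping obstacle'' you anticipate disappears because the cross terms cancel rather than having to be bounded.
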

\begin{proof}
See Appendix \ref{app-thmproof}.
\end{proof}
This theorem shows that the proposed algorithm has linear convergence for non-smooth $g$ if $B_k$ has full row rank. Centralized algorithms can achieve linear convergence when $B=[B_1 \ \cdots \ B_k]$ has full row rank. We leave it to future work to verify whether decentralized algorithms can also achieve linear convergence under the same condition. 
\begin{remark}[\sc Semi-strongly-convex] {\rm
Since \eqref{saddle_point_cen} has the same form as \eqref{saddle_point_dec}, it can be solved with existing algorithms. Thus, one can utilize existing algorithms to derive other decentralized  solutions for problem \eqref{saddle_point_dec}. However, existing linear convergence results \cite{chen2013aprimal,dhingra2018proximal,deng2016global} require the saddle-point to be strongly-convex with respect to (w.r.t.) the primal-variable. Problem \eqref{saddle_point_dec} is only strongly-convex w.r.t. the primal block vector $\sw$ but not strongly-convex w.r.t. to the whole variable ${\rm col}\{\sw,\ssx\}$. Therefore, the linear convergence results from \cite{chen2013aprimal,dhingra2018proximal,deng2016global} are not applicable in our setup. } 
\end{remark} 
\section{Numerical Simulation}
In this section, we apply algorithm \eqref{exac_proxdiff_dual}  to solve the following problem:
\eq{
\min_{w_1,\cdots, w_K} \quad  \frac{1}{2}  \sum_{k=1}^K  w_k\tran R_k w_k + r_k\tran w_k, \ \ s.t. \ \sum_{k=1}^K w_k \leq b.
}	  
This problem fits into \eqref{min-generally-coupled} with $B_k=I$ and $g(x)=0$ if $x \leq  b$ and $\infty$ otherwise. We randomly generate positive-definite matrices $R_k \in \real^{10 \times 10}$ and vectors $r_k \in \real^{10}$. The entries of the vector $b$ are uniformly chosen between $(0,1)$. The combination matrix $A$ is generated using the Metropolis rule \cite{sayed2014nowbook}. We consider a randomly generated network with $K=20$ agents shown on the left side of Fig.~\ref{fig:sim}. The simulation result is shown on the right side of Fig.~\ref{fig:sim}, where the linearized prox-ascent algorithm is:
\begin{subequations} \label{alg_prox_ascent}
\eq{
\sw_i&=\sw_{i-1}-\mu_w \grad \cJ(\sw_{i-1})-\mu_w B\tran \lambda_{i-1}, \\
\lambda_i&= {\rm prox}_{\mu_y g^*}(\lambda_{i-1}+\mu_y B  \sw_i).
}
The plot clearly shows PED$^2$~\eqref{exac_proxdiff_dual} converges linearly in this setup, and it is slightly slower than the centralized algorithm \eqref{alg_prox_ascent}.
\end{subequations}
\vspace{-5mm}
\begin{figure}[H]
\centering
	\includegraphics[width=\linewidth]{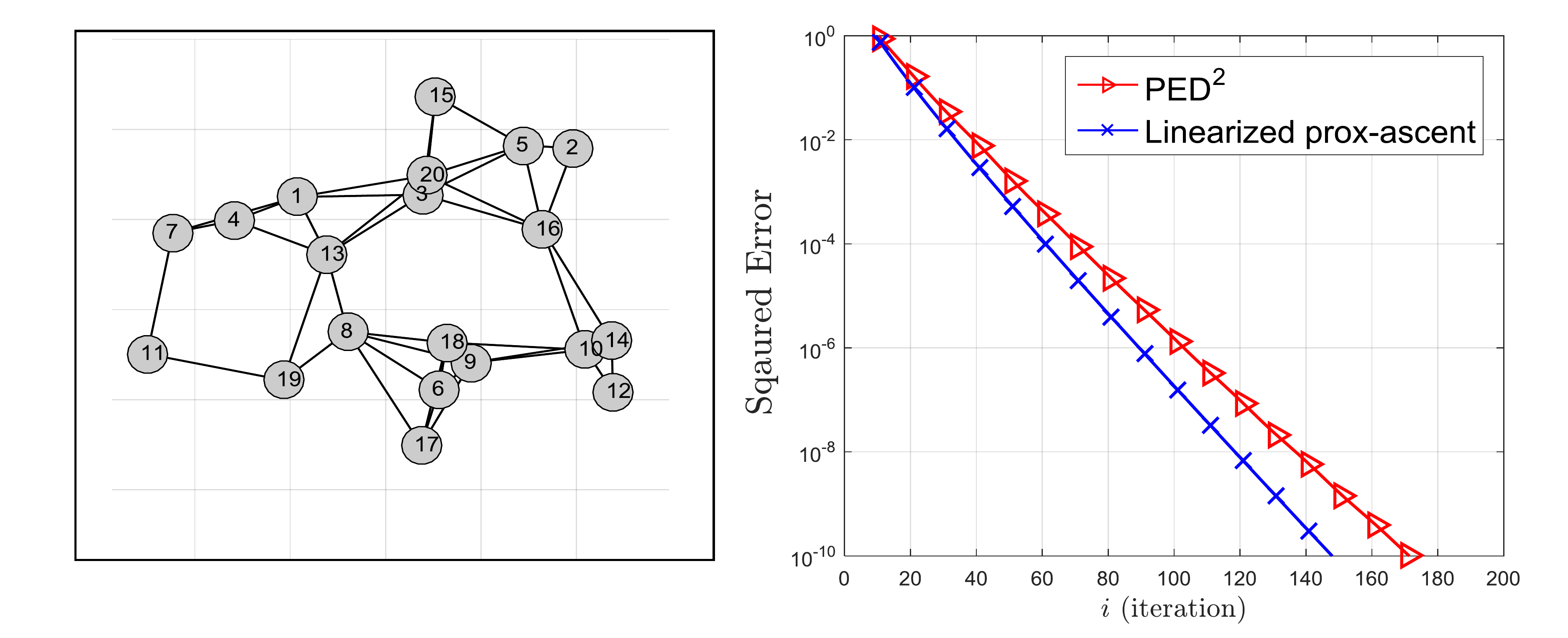}
\caption{ \small  The network topology used in the simulation and squared error $\|\sw_i-\sw^\star\|^2$ evolution of the proposed PED$^2$ \eqref{exac_proxdiff_dual} and the centralized algorithm \eqref{alg_prox_ascent} with $\mu_w=0.03$ and $\mu_y=2$.} \label{fig:sim}
\end{figure}

\section{Conclusion}
We studied the sharing problem \eqref{min-generally-coupled}, where agents are coupled through a convex possibly non-smooth composite function. To solve \eqref{min-generally-coupled} in a decentralized manner, we  reformulated it into an equivalent saddle-point problem. We then proposed a proximal decentralized algorithm and established its linear convergence. To our knowledge, this is the first {\em decentralized} linear convergence result for the multi-agent sharing problem \eqref{min-generally-coupled} with a general non-smooth coupling function.

\bibliographystyle{IEEEtran}
\bibliography{myref_review}

\appendices
\section{Proof of Theorem \ref{thm_lin_con}} \label{app-thmproof}
The following lemma establishes a useful equality, whose proof is omitted due to space limitations.
\begin{lemma} {\bf (Equality)} \label{lemma_equality}
Assume that the step-sizes $\mu_w$ and $\mu_y$ are strictly positive. The iterates of \eqref{alg_dec} satisfy:
\eq{
&\|\tsw_i\|^2_{I-\mu_y \mu_w \cB_d\tran \cB_d}+c_y\|\tsz_{i} \|_{I-\cL^2}^2+c_y\|\tsx_{i}\|^2 \nonumber \\
= &  \|\tsw_{i-1}-\mu_w (\grad \cJ(\sw_{i-1})-\grad \cJ(\sw^o))\|^2 \nnb
&  +c_y\|\tsy_{i-1}\|_{I-\mu_w \mu_y \cB_d\cB_d\tran}^2+ c_y\|\tsx_{i-1}\|_{I-\cL^2}^2, \label{equal_lemma}
}
where $c_y=\mu_w / \mu_y$. \qed
\end{lemma}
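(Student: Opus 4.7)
The plan is to verify the stated identity by substituting the error recursions \eqref{err_w}--\eqref{err_x} directly into both sides and matching terms. No inequalities, monotonicity, or convexity arguments are required; this is purely an algebraic bookkeeping exercise in which the choice $c_y=\mu_w/\mu_y$ is engineered precisely to make the cross terms cancel. Let $h_{i-1}\define \grad\cJ(\sw_{i-1})-\grad\cJ(\sw^o)$ so that \eqref{err_w} reads $\tsw_{i-1}-\mu_w h_{i-1} = \tsw_i+\mu_w\cB_d^{\!\top}\tsy_{i-1}$.

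First I would reduce the right-hand side. Using the displayed rewrite of $\tsw_{i-1}-\mu_w h_{i-1}$ gives $\|\tsw_{i-1}-\mu_w h_{i-1}\|^2 = \|\tsw_i\|^2 + 2\mu_w\tsw_i^{\!\top}\cB_d^{\!\top}\tsy_{i-1} + \mu_w^2\|\cB_d^{\!\top}\tsy_{i-1}\|^2$. Expanding the weighted norm $c_y\|\tsy_{i-1}\|^2_{I-\mu_w\mu_y\cB_d\cB_d^{\!\top}}$ and invoking $c_y\mu_w\mu_y=\mu_w^2$, the term $\mu_w^2\|\cB_d^{\!\top}\tsy_{i-1}\|^2$ cancels exactly, leaving
\[
\mathrm{RHS}=\|\tsw_i\|^2+2\mu_w\tsw_i^{\!\top}\cB_d^{\!\top}\tsy_{i-1}+c_y\|\tsy_{i-1}\|^2+c_y\|\tsx_{i-1}\|^2_{I-\cL^2}.
\]

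Next I would reduce the left-hand side. Using \eqref{err_x}, $\|\tsx_i\|^2 = \|\tsx_{i-1}\|^2 - 2\tsx_{i-1}^{\!\top}\cL\tsz_i + \|\tsz_i\|^2_{\cL^2}$, so $c_y\|\tsz_i\|^2_{I-\cL^2}+c_y\|\tsx_i\|^2 = c_y\|\tsz_i\|^2+c_y\|\tsx_{i-1}\|^2-2c_y\tsx_{i-1}^{\!\top}\cL\tsz_i$. Then I would expand $\|\tsz_i\|^2$ via \eqref{err_z}, producing six terms; the identities $c_y\mu_y^2=\mu_w\mu_y$ and $c_y\mu_y=\mu_w$ make the $\mu_w\mu_y\|\cB_d\tsw_i\|^2$ term cancel against $-\mu_w\mu_y\|\cB_d\tsw_i\|^2$ coming from $\|\tsw_i\|^2_{I-\mu_w\mu_y\cB_d^{\!\top}\cB_d}$. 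Finally, substituting $\tsz_i$ again in the cross term $-2c_y\tsx_{i-1}^{\!\top}\cL\tsz_i$ yields three further pieces; two of them cancel the cross terms $2c_y\tsy_{i-1}^{\!\top}\cL\tsx_{i-1}$ and $2\mu_w\tsw_i^{\!\top}\cB_d^{\!\top}\cL\tsx_{i-1}$ (using symmetry of $\cL$ and transposition), while the third combines with $c_y\|\tsx_{i-1}\|^2_{\cL^2}$ to form $-c_y\|\tsx_{i-1}\|^2_{\cL^2}$. Collecting what remains gives exactly the expression for RHS above.

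I do not expect a conceptual obstacle: the identity is an exact algebraic equality, not a Lyapunov bound. The main risk is purely clerical---tracking roughly a dozen bilinear terms across two recursion substitutions and confirming every cancellation. A safeguard is to organize the computation by pairing each cross term with its counterpart from the other side and exploiting the symmetry $\cL^{\!\top}=\cL$ together with the two rescalings $c_y\mu_y=\mu_w$ and $c_y\mu_y^2=\mu_w\mu_y$, which are the only nontrivial ingredients beyond expansion. Note that the proximal step \eqref{err_y} and the fixed-point relation \eqref{optimality4} are never invoked here: the lemma is a structural identity about the $(\tsw,\tsz,\tsx,\tsy)$ recursion only, which is why the hypothesis requires just $\mu_w,\mu_y>0$ with no convexity or spectral bounds.
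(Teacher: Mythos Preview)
Your proposal is correct: the lemma is indeed a purely algebraic identity, and your plan of substituting \eqref{err_w}--\eqref{err_x}, expanding, and exploiting $c_y\mu_y=\mu_w$, $c_y\mu_y^2=\mu_w\mu_y$, and $\cL^{\!\top}=\cL$ to cancel all cross terms goes through exactly as you describe. The paper itself omits the proof due to space constraints, so there is nothing to compare against; your direct verification is the natural argument and almost certainly what the authors intended.
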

It can be verified that
\eq{
&\|\tsw_{i-1}-\mu_w (\grad \cJ(\sw_{i-1})-\grad \cJ(\sw^o))\|^2 \nnb
\leq& \big(1- \tfrac{2\mu_w \delta \nu}{ \delta+\nu}\big) \|\tsw_{i-1}\|^2 \nnb
\leq& \left({1- {2\mu_w \delta \nu \over \delta+\nu} \over 1-\mu_y \mu_w \sigma_{\max}^2(\cB_d)}\right) \|\tsw_{i-1}\|_{I-\mu_w \mu_y \cB_d\tran \cB_d}^2. \label{bound_grad_W}
}
where the first inequality holds under Assumption \ref{assump:cost} for  $\mu_w \leq {2 \over \delta+\nu}$ -- see \cite{nesterov2013introductory}. The second inequality holds if $I-\mu_y \mu_w \cB_d\tran \cB_d$ is positive definite.
Let 
\eq{
\gamma_1 \define {1- {2\mu_w \delta \nu \over \delta+\nu} \over 1-\mu_y \mu_w \sigma_{\max}^2(\cB_d)}.
}
Then, we have $\gamma_1<1$ if $\mu_y < {2 \delta \nu \over (\delta+\nu)\sigma_{\max}^2(\cB_d)}$. 
 Since the proximal mapping is nonexpansive, it holds from \eqref{err_y} that:
\eq{
\|\tsy_i\|^2&= \|{\rm prox}_{\mu_y \cG^*}(\bar{\cA} \ssz_{i})-{\rm prox}_{\mu_y \cG^*}(\bar{\cA} \ssz^o)\|^2 \nnb
&\leq \|\bar{\cA} \tsz_i\|^2= \| \tsz_i\|_{\bar{\cA}^2}^2 \leq  \|\tsz_i\|_{I-\cL^2}^2, \label{bound_Y_Z}
}
where the last inequality holds due to Assumption~\ref{assump:matrix}. Since each $B_k$ has full row rank, it holds that
\eq{
0< \lambda_{\min}(\cB_d\cB_d\tran)I \leq \cB_d\cB_d\tran }
where $\lambda_{\min}(\cdot)$ denotes the smallest eigenvalue of its argument. Therefore, 
\eq{
\|\tsy_{i-1}\|_{I-\mu_w \mu_y \cB_d\cB_d\tran}^2 \leq \big(1-\mu_w \mu_y\lambda_{\min}(\cB_d\cB_d\tran)\big)\|\tsy_{i-1}\|^2.  \label{bound_yy}
}
Finally, since $\ssx_{-1}=0$ and ${\ssx}^o$ are in the range space of $\cL$, the error quantity $\tsx_{i-1}$ always belongs to the range space of $\cL$. This implies that $\|\tsx_{i-1}\|_{\cL^2}^2 \geq \underline{\sigma}^2(\cL) \|\tsx_{i-1}\|^2$ where $\underline{\sigma}^2(\cL)$ denotes the minimum non-zero singular value of $\cL$  -- see \cite[Lemma 1]{alghunaim2019linear}. Therefore,
\eq{
\|\tsx_{i-1}\|_{I-\cL^2}^2 \leq (1-\underline{\sigma}^2(\cL)) \|\tsx_{i-1}\|^2 \label{bound_xx}
}
 Substituting the bounds \eqref{bound_grad_W}, \eqref{bound_Y_Z}, \eqref{bound_yy}, and \eqref{bound_xx} into \eqref{equal_lemma} gives:
\eq{
&\|\tsw_i\|^2_{I-\mu_y \mu_w \cB_d\tran \cB_d}+c_y\|\tsy_{i} \|^2+c_y\|\tsx_{i}\|^2 \nnb
& \leq  \gamma_1 \|\tsw_{i-1}\|_{I-\mu_y \mu_w \cB_d\tran \cB_d}^2+\gamma_2 c_y\|\tsy_{i-1}\|^2 +  \gamma_3 c_y\|\tsx_{i-1}\|^2, \label{last_inq}
}
where $\gamma_2 \define  1-\mu_w \mu_y\lambda_{\min}(\cB_d\cB_d\tran)$ and $\gamma_3 \define  1-\underline{\sigma}^2(\cL)$. Under the step-size conditions given in \eqref{step_size_c}, 
 it holds that $I-\mu_y \mu_w \cB_d\tran \cB_d>0$ and $\gamma=\max\{\gamma_1,\gamma_2,\gamma_3\}<1$. Moreover, we have that $(1-\mu_y \mu_w \sigma_{\max}^2(\cB_d)) \|\tsw_i\|^2 \leq \|\tsw_i\|^2_{I-\mu_y \mu_w \cB_d\tran \cB_d}$. Let
\eq{
C_o= {\|\tsw_{-1}\|_{I-\mu_y \mu_w \cB_d\tran \cB_d}^2+ c_y\|\tsy_{-1}\|^2 +   c_y\|\tsx_{-1}\|^2  \over 1-\mu_y \mu_w \sigma_{\max}^2(\cB_d)}.
} 
Iterating inequality \eqref{last_inq}  yields the result.

\end{document}